\def\ad{\mathrm{ad}}
\def\st{\mathrm{st}}
\def\ur{\mathrm{ur}}
\def\Rloc{R^{\mathrm{loc}}}
\def\Htw{\widetilde{H}}
\def\t{\mathbf{t}}
\newtheorem{theoremOrd}{Theorem}
\renewcommand{\thetheoremName}
\newtheorem{conjectureOrd}[theoremOrd]{Conjecture}
\def\Adele{\mathbb{A}}
\def\Afinite{\mathbb{A}^{(\infty)}}
\def\LL{\mathcal{L}}
\def\Aut{\mathrm{Aut}}
\def\Sym{\mathrm{Sym}}
\def\codim{\mathrm{codim}}
\def\Res{\mathrm{Res}}
\def\H{\mathcal{H}}
\def\T{\mathbf{T}}
\def\Z{\mathbf{Z}}
\def\G{\mathbb{G}}
\def\A{\mathbb{A}}
\def\OL{\mathcal{O}}
\def\Q{\mathbf{Q}}
\def\F{\mathbf{F}}
\def\R{\mathbf{R}}
\def\C{\mathbf{C}}
\def\a{\mathfrak{a}}
\def\m{\mathfrak{m}}
\def\Qbar{\overline{\Q}}
\def\rhobar{\overline{\rho}}
\def\Tor{\mathrm{Tor}}
\def\PGL{\mathrm{PGL}}
\def\Ann{\mathrm{Ann}}
\def\GL{\mathrm{GL}}
\def\Gal{\mathrm{Gal}}
\def\Frob{\mathrm{Frob}}
\def\eps{\epsilon}
\def\NF{N_{F/\Q}}
\def\Rloc{R_{\mathrm{loc}}}
\DeclareMathOperator\red{red}
\newtheorem{theorem}{Theorem}[section]
\newtheorem{df}[theorem]{Definition}
\newtheorem{lemma}[theorem]{Lemma}
\newtheorem{prop}[theorem]{Proposition}
\newtheorem{remark}[theorem]{Remark}
\begin{document}

\title{Semistable Modularity Lifting over imaginary quadratic fields}
\author{Frank Calegari} 
\subjclass[2010]{11F33, 11F80.}
\thanks{The first author was supported in part by NSF  Grant
  DMS-1404620.}
  
\maketitle


\section{Introduction}

In this paper,  we prove  non-minimal  modularity lifting theorem  for ordinary
Galois representations over imaginary quadratic fields. Our first theorem is as follows:

\begin{theorem} \label{theorem:elliptic} Let $F/\Q$ be an imaginary quadratic field, and let $p > 2$ be a prime
which is unramified in~$F$. 
Let $E/F$ be a semistable elliptic curve with ordinary reduction at all $v|p$. Suppose that
the mod~$p$ Galois representation:
$$\rhobar_{E,p}:G_{F} \rightarrow \Aut(E[p]) = \GL_2(\F_p)$$
is absolutely irreducible over $G_{F(\zeta_p)}$  and is modular. 
Assume that the Galois representations attached to ordinary cohomology
classes for Bianchi groups are ordinary  --- see Conjecture~\ref{conj:Ord}.
Then~$E$ is modular.
\end{theorem}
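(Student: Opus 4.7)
The plan is to establish an $R = \mathbb{T}$ theorem for the universal ordinary deformation ring of $\rhobar := \rhobar_{E,p}$ at the tame level given by the prime-to-$p$ conductor of $E$, with Steinberg conditions at the primes of multiplicative reduction. Because the defect $\ell_0$ for Bianchi cohomology equals $1$, the classical Taylor--Wiles patching cannot be applied directly; I would instead follow the derived patching framework of Calegari--Geraghty, patching the ordinary cohomology complex to produce a patched module $M_\infty$ over a power series ring $S_\infty$ and a patched ring $R_\infty$, arranged so that $M_\infty$ has depth $\dim S_\infty - \ell_0$ over $S_\infty$. Once such an $R = \mathbb{T}$ isomorphism is established, the Galois representation $\rho_{E,p}$ defines a point of $\mathrm{Spec}(R)$, hence corresponds to a Hida family of ordinary Bianchi eigenforms whose weight-$2$ specialization witnesses the modularity of $E$.

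The first concrete step is to set up Hida theory for ordinary classes in the cohomology of the relevant Bianchi $3$-manifolds and to invoke Conjecture~\ref{conj:Ord} so that Hecke eigenclasses give rise to Galois representations which are ordinary at the primes above $p$; the unramifiedness of $p$ in $F$ ensures that the ordinary deformation functor behaves as expected, and the hypothesis that $\rhobar|_{G_{F(\zeta_p)}}$ is absolutely irreducible supplies the pool of Taylor--Wiles primes. In the \emph{minimal} case, patching then produces a faithful action of $R_\infty$ on $M_\infty$, and the depth condition $\mathrm{depth}_{S_\infty} M_\infty = \dim S_\infty - \ell_0$ combined with the dimension identity $\dim R_\infty + \ell_0 = \dim S_\infty$ forces $R_\infty = \mathbb{T}_\infty$ and hence the minimal $R = \mathbb{T}$ isomorphism after de-patching.

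The substantive content needed here is the non-minimal case, which is unavoidable because $\rhobar_{E,p}$ need not detect every prime of multiplicative reduction of $E$. I would handle this via Taylor's Ihara avoidance argument: at each Steinberg prime $v$ at which the residual representation is unramified, one introduces auxiliary primes $w \equiv 1 \pmod p$ and compares two local deformation problems at $v$ --- typically a principal-series and a unipotent condition --- whose generic fibres coincide. Patching separately for each problem and exploiting this generic-fibre coincidence forces each patched module to be supported on every component of the local deformation ring at $v$, which then propagates the minimal $R = \mathbb{T}$ statement to the non-minimal setting matching the ramification behaviour of $E$.

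The main obstacle is the interaction of the defect $\ell_0 = 1$ with Ihara avoidance. In the defect-zero setting over $\Q$ the patched module is free over $S_\infty$, so support at the generic fibre propagates mechanically to the integral level; in the present defect-one setting $M_\infty$ is only balanced of projective dimension one, and one must verify both that the patched modules for the two auxiliary deformation problems still have the correct generic-fibre support and that the resulting identification is compatible with the Hida-theoretic $\Lambda$-structure so that it de-patches correctly. Overcoming this is what makes the theorem genuinely new in the Bianchi setting; once it is done, the identification $R^{\mathrm{ord}} \cong \mathbb{T}^{\mathrm{ord}}_{\mathfrak{m}}$ places the Galois representation of $E$ on the Hida variety and yields the modularity of $E$.
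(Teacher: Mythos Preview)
Your proposal takes a genuinely different route from the paper, and the route you sketch has a gap that you yourself flag without resolving.

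The paper does \emph{not} use Taylor's Ihara avoidance. Instead, it replaces the arithmetic manifolds $Y_0(N)$ by $S$-arithmetic spaces $Y[1/S]$ for $\PGL_2(\OL_F[1/S])$, where $S$ is the set of Steinberg primes. In this cohomology the Steinberg condition at $v\in S$ is automatic (one has $U_v^2 = 1$), so the deformation problem one patches is already ``minimal'' in the sense that the relevant local ring $\Rloc^{\st}$ has a single geometric component. The non-minimal issue is thus transmuted into two new problems: (i) showing that modularity of $\rhobar$ for $Y$ propagates to modularity for $Y[1/S]$ --- this is a level-\emph{raising} theorem (Theorem~\ref{theorem:raising}), proved via Ihara's Lemma and the level-raising spectral sequence of~\cite{CV}; and (ii) controlling the patched cohomology in the enlarged range of degrees $[1, m+2]$ with $m = |S|$. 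Step (ii) is the crux: Proposition~\ref{prop:keyestimate} shows that the patched modules $\Htw_i$ for $i \le m$ have codimension $\ge m - i + 3$ over $S_{\infty}$, by an induction using the amalgam structure of $\PGL_2(\OL_F[1/S])$ together with the observation that such classes are simultaneously Steinberg and unramified at $v\mid S$, hence governed by $\Rloc^{\st,\ur}$ of excess codimension. This inductive codimension bound is what replaces the Ihara-avoidance comparison you envisage.

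Your approach via Ihara avoidance is not a priori hopeless --- it has since been implemented in $\ell_0 > 0$ settings --- but as written you identify ``the interaction of the defect $\ell_0 = 1$ with Ihara avoidance'' as the main obstacle and then stop. That is precisely the hard step: in the balanced projective-dimension-one situation the passage from generic-fibre support to integral faithfulness of the patched module needs a genuine argument, and you have not supplied one. A secondary omission is the nilpotent-ideal issue: the Galois representations land only in $\T/I$ with $I$ nilpotent of bounded index, and the paper devotes \S\ref{section:nilpotent} to showing this does not disturb the codimension bounds; your sketch does not touch this point. Finally, the detour through Hida families in your last paragraph is unnecessary here: once $R^{\red} = \T^{\red}$ is established at the correct level, $\rho_{E,p}$ is directly modular of weight two.
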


The modularity hypotheses is satisfied, for example, when $\rhobar_{E,p}$ extends to an
odd representation~$\rhobar$ of $G_{\Q}$. In particular, if~$3$ or~$5$ is unramified in~$F$,
this theorem implies --- conditionally on Conjecture~\ref{conj:Ord} --- the modularity of infinitely many~$j$ invariants in~$F \setminus \Q$, because
one can take any $E/F$ such that $E[p] \simeq A[p]$ where $A/F$ is the base change of an
elliptic curve over~$\Q$.
If~$p = 3$, then the representation associated to~$E[3]$ has solvable image 
($\PGL_2(\F_3) \simeq S_4$). However, unlike in the case of totally real fields, the automorphic
form~$\pi$ associated to the corresponding Artin representation does not in any obvious way admit ``congruences'' to modular forms of cohomological weight,
and hence the modularity hypothesis cannot be deduced from  
Langlands--Tunnell~\cite{Langlands,Tunnell} (as in the deduction of Theorem~0.3 of~\cite{W} 
from Theorem~0.2). We deduce Theorem~\ref{theorem:elliptic} from the following:

\begin{theorem} \label{theorem:main} Assume conjecture~\ref{conj:Ord}. 
Suppose that $p > 2$ is unramified in~$F$, and let
$$\rho: G_{F} \rightarrow \GL_2(\Qbar_p)$$
be a continuous irreducible Galois representation unramified
outside finitely many primes. Assume that:
\begin{enumerate}
\item The determinant of~$\rho$ is~$\eps^{k-1}$, where~$\eps$ is the cyclotomic character.
\item  If $v|p$, then $\rho|D_v$ is ordinary and crystalline with Hodge--Tate weights $[0,k-1]$
for some $k \ge 2$. 
\item $\rhobar |_{F(\zeta_p)}$ is absolutely irreducible. If~$p = 5$, then the projective image of~$\rhobar$
is not~$\PGL_2(\F_5)$.
\item $\rhobar$ is modular.
\item  If $\rho$ is ramified at $v \nmid p$, then 
$\rho|D_v$ is semistable, that is,
$\rho|I_v$ is unipotent.
\end{enumerate}
Then $\rho$ is modular.
\end{theorem}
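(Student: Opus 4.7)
The plan is to establish $R = T$ for a deformation ring $R$ parameterising deformations of $\rhobar$ satisfying the local conditions dictated by hypotheses (1), (2), and (5), matched against an ordinary Hecke algebra $T$ acting on the cohomology of an arithmetic quotient of $\GL_2(\A_F)$ at an appropriate level. The framework will be Calegari--Geraghty style Taylor--Wiles--Kisin patching, which is the natural adaptation of Wiles' method to the setting where $F$ is imaginary quadratic and the associated locally symmetric spaces (Bianchi manifolds) have defect $l_0 = 1$. Conjecture~\ref{conj:Ord} enters by supplying, for each ordinary Hecke eigenclass in the cohomology, a Galois representation with the prescribed local behaviour above~$p$; this is what lets the patched module $M_\infty$ be built as a module over the full local ordinary deformation ring.

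Let $\Sigma$ denote the finite set of primes at which $\rhobar$ is ramified, enlarged to include the primes above~$p$. The first step is the \emph{minimal} case. One chooses auxiliary Taylor--Wiles primes $Q_n$ of growing size --- hypothesis~(3), the big-image condition on $\rhobar|_{G_{F(\zeta_p)}}$, supplies the Galois-cohomological input for their existence --- and patches together the local deformation rings $R^{\min}_{\Sigma \cup Q_n}$ and the corresponding ordinary integral Hecke modules. The output is a patched module $M^{\min}_\infty$ over a power-series ring $R_\infty = \Rloc[[x_1,\ldots,x_g]]$ of the expected Krull dimension. The Calegari--Geraghty dimension balance, together with the codimension estimate on the support of $M^{\min}_\infty$ coming from the derived structure of Bianchi cohomology, then yields $R^{\min} = T^{\min}$.

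The second step bootstraps to the non-minimal case by an inductive level-raising argument over primes $v \in \Sigma$ with $v \nmid p$. At each such prime, one compares the semistable deformation ring to its minimal counterpart via a Wiles-style numerical criterion: compute the change in congruence-ideal length on the $R$-side from the explicit local deformation theory at~$v$ (the semistable local deformation ring is well understood), and match it against the change in the Hecke-side congruence module. The latter calculation requires an Ihara-type input for ordinary Bianchi cohomology, or alternatively Taylor's ``Ihara avoidance'' substitute using two distinct semistable-at-$v$ local deformation problems (Steinberg versus tamely ramified principal series, whose special fibres coincide after inverting~$p$), to ensure that level raising at~$v$ produces enough congruences to saturate the upper bound.

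The main obstacle will be running the numerical criterion inside the Calegari--Geraghty framework, where $M_\infty$ is not a priori faithful or Cohen--Macaulay over $R_\infty$, so the classical Wiles formula for congruence modules must be replaced by a balanced or derived version that accounts for cohomology in several degrees simultaneously. A secondary difficulty is the Ihara's lemma input itself, which for $\GL_2$ over imaginary quadratic fields is not established in the generality needed; the Ihara-avoidance route bypasses this but requires reconciling two different global patching setups with the Bianchi-group setting, which is delicate because of the defect $l_0 = 1$ and the dependence on Conjecture~\ref{conj:Ord} to attach Galois representations on both sides of the comparison.
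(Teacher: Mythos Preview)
Your outline follows the ``classical'' template --- prove a minimal $R=T$ first, then level-raise prime by prime using either a Wiles numerical criterion or Taylor's Ihara-avoidance trick --- but this is \emph{not} the route the paper takes, and the paper in fact explains in the introduction why the numerical-criterion step is problematic here: the Hecke rings in the Bianchi setting are generically not torsion-free and are not complete intersections even minimally, so the Wiles--Lenstra style congruence-module bookkeeping has no obvious substitute. You flag this obstacle yourself but do not offer a way around it; as written this is a genuine gap.

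The paper's actual strategy bypasses the minimal/non-minimal dichotomy entirely by changing the geometric object. Rather than working on arithmetic quotients of $\PGL_2(\OL_F)$ and adding Iwahori level at the Steinberg primes $S$, one passes to the $S$-arithmetic locally symmetric space $Y[1/S]$ built from $\PGL_2(\OL_F[1/S])$ and the Bruhat--Tits building at $S$. On $Y[1/S]$ the relation $U_v^2=1$ holds identically, so every cohomology class is automatically Steinberg (or unramified twist of Steinberg) at all $v\mid S$; the ``non-minimal'' deformation problem thereby becomes the minimal one for this space, and a single Calegari--Geraghty patching argument suffices. Two ingredients replace your second step: a level-raising theorem (Theorem~\ref{theorem:raising}) transferring the modularity of $\rhobar$ from $Y$ to $Y[1/S]$ via the level-raising spectral sequence and an Ihara lemma proved using Serre's congruence subgroup property for $\PGL_2(\OL_F[1/S])$; and a codimension estimate (Proposition~\ref{prop:keyestimate}) showing that the patched cohomology $\Htw_i(Y[1/S])$ in the extra low degrees $i\le m$ is small enough over $S_\infty$ that the Calegari--Geraghty argument still forces top-degree codimension one. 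This last estimate is the technical heart and has no analogue in your outline.

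Your Ihara-avoidance alternative is a legitimate parallel route and has since been made to work in defect-$\ell_0$ settings, but it is not what is done here, and your sketch does not indicate how to reconcile the two patched setups with the nilpotent ideals in Scholze's Galois representations (which the paper handles separately in \S\ref{section:nilpotent}).
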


\medskip

The main idea of this paper is to combine the modularity lifting theorems of~\cite{CG} with
the techniques on level raising developed in~\cite{CV}. Wiles' original argument for
proving modularity in the non-minimal case required two ingredients: the use of a subtle numerical
criterion concerning complete intersections which were finite flat over a ring of integers~$\OL$,
and Ihara's Lemma.
Although Ihara's Lemma (in some form) is available
for imaginary quadratic fields (see~\cite{CV}, {Ch~IV}), it seems tricky to generalize the
numerical criterion to this setting --- the Hecke rings are invariably not torsion free, and
are rarely complete intersections even in the minimal case (the arguments of~\cite{CG}
naturally present the minimal deformation ring as a quotient of a power series in $q-1$ variables
by~$q$ elements). Instead, the idea is to work in a context in which the ``minimal''
deformations are all Steinberg at a collection of auxiliary primes~$S$. It turns out that a natural
setting where one expects this to be true is in the cohomology of the~$S$-arithmetic group
$\PGL_2(\OL_F[1/S])$. In order to apply the methods of~\cite{CG}, one requires two main
auxiliary hypotheses to hold. The first is that the range of cohomology which doesn't vanish after
localizing at a suitable maximal ideal~$\m$ has length~$\ell_0 = 1$. When the number of primes~$m$
dividing~$S$ is zero, this is an easy lemma, and was already noted in~\cite{CV} (Lemma~5.9). When~$m = 1$, the required vanishing follows from the congruence subgroup
property of $\PGL_2(\OL_F[1/S])$ as proved by Serre~\cite{CSP}. When $m > 1$, however, the problem
is more subtle. The cohomology in this range may well be non-trivial and is related
to classes arising from the algebraic~$K$-theory of~$\OL_F$ (as explained in~\cite{CV}). 
Nevertheless, 
if one  first completes at a non-Eisenstein maximal ideal~$\m$,
the necessary vanishing required for applications to modularity is expected to hold,
and indeed was 
conjectured 
in~\cite{CV}.
We do not, however, prove this vanishing conjecture in this paper. Instead, we 
prove that the patched cohomology in these lower degrees is sufficiently
small (as a module over the patched diamond operator ring~$S_{\infty}
= \OL \llbracket x_1, \ldots ,x_q\rrbracket$) that a modified version of the argument of~\cite{CG}
still applies.

\medskip

There are three further technical obstacles which must be dealt with. We now discuss them in turn.

The methods of~\cite{CG} require that the Galois representations
(constructed in much greater generality than used here by~\cite{Scholze})
satisfy the expected local properties at~$v|p$ and~$v \nmid p$.
The required local--global
compatibility for~$v \nmid p$ was established by~\cite{Varma}. The required local--global
compatibility for~$v | p$ in the ordinary case is still open. 
We do not resolve this issue here, but instead make the weakest possible
assumption necessary for applications --- 
namely that cohomology classes 
on which 
the operator~$U_v$ is invertible give rise to Galois representations which admit an unramified
quotient on which Frobenius at~$v$ acts by~$U_v$. 
We believe that this formulation 
(Conjecture~\ref{conj:A}) might be amenable to current technology.

A second issue that we must deal with is relating the modularity assumption on~$\rhobar$
for $\PGL_2(\OL_F)$ to the required modularity for the group~$\PGL_2(\OL_F[1/S])$.
This is a form of level raising, and to prove it we use the level raising spectral sequence of~\cite{CV}. This
part of the paper is not conditional on any conjectures, and may be viewed as a generalization
of Ribet's level raising theorem in this context. Many of the ideas here are already present in~\cite{CV}.

The final issue which must be addressed is that Scholze's Galois representations are only defined over the ring~$\T/I$ for some nilpotent ideal~$I$ with a fixed level of nilpotence (depending on the group). Moreover, some of the constructions here also require increasing the degree
of nilpotence. Thus we are also required to explain how the methods of~\cite{CG} may be adapted to
this context. This last point requires only a technical modification. The essential point is that
if a finitely generated~$S_{\infty}= \OL \llbracket x_1, \ldots ,x_{q}\rrbracket$-module~$M$ is annhilated by~$I^{2}$, then~$M/I$
has the same co-dimension over~$S_{\infty}$ as~$M$.

\begin{remark} \emph{Our theorem and its proof may be generalized to allow other ramification
types at auxiliary primes~$v \nmid p$,  \emph{providing} that this new ramification is of minimal type,
e.g~$\rho(I_v) \simeq \rhobar(I_v)$. This can presumably be achieved using the modification
found by Diamond~\cite{DiamondVexing} and also developed in~\cite{CDT}. The required change
would be to modify the corresponding local system at such primes. We avoid this in order to
clarify exactly the innovative aspects of this paper.}
\end{remark}

Suppose that~$\rhobar$ satisfies the conditions of Theorem~\ref{theorem:main}.
The assumption that~$\rhobar$ is modular is defined to mean that
 the localization $H_1(Y,\LL)_{\m}  \ne 0$
for a certain arithmetic quotient~$Y$ and a local system~$\LL$ corresponding to~$\rhobar$ and
maximal ideal~$\m$ of the corresponding anaemic Hecke algebra. (This is a weaker property
than requiring~$\rhobar$ to be the mod-$p$ reduction of a representation associated to
an automorphic form of minimal level.)
This is equivalent to asking that~$H_1(Y,\LL/\varpi)_{\m}$ is non-zero and also to
asking that~$H_2(Y,\LL/\varpi)_{\m}$ is non-zero. (If~$H_2$ vanishes, then~$H_1$ is torsion free,
which implies that there exists a corresponding automorphic form, which then must contribute to~$H_2$.)

\subsection{Notation}

We fix an imaginary quadratic field $F/\Q$, and an odd prime~$p$ which is unramified in~$F$.
Let~$\OL$ denote the ring of integers in a finite extension of~$\Z_p$. We shall assume that~$\OL$
is sufficiently large that it admits inclusions~$\OL_{F,v} \rightarrow \OL$ for each~$v|p$, and that the residue
field~$k = \OL/\varpi$ contains sufficiently many eigenvalues of any relevant representation~$\rhobar$. 
Let~$N$ denote a tame level prime to~$p$. Let~$S$ denote a finite set of primes disjoint
from~$N$ and~$p$. Let~$m$ denote the number of primes in~$S$. By abuse of notation, we sometimes
use~$S$ to denote the ideal of~$\OL_F$ which is the product of the primes in~$S$.

Let $\G = \Res_{F/\Q}(\PGL(2)/F)$,  and write $G_{\infty} = \G( \R) = \PGL_2(\C)$.
Let $K_{\infty}$ denote a maximal compact of $G_{\infty}$ with
connected component $K^{0}_{\infty}$, so~$G_{\infty}/K^{0}_{\infty} = \H$ is hyperbolic~$3$-space.
Let $\Adele$ be the adele ring of $\Q$,  and
$\Afinite$ the finite  adeles.  
For any compact open subgroup $K$ of
$\G(\Afinite)$, we may define an ``arithmetic manifold'' (or rather ``arithmetic orbifold'')
$Y(K)$ as follows:
$$Y(K):=  \G(F) \backslash (\H \times \G(\Afinite))/K = \G(F) \backslash \G(\Adele)/K^0_{\infty} K.$$ 
The orbifold $Y(K)$ is not compact but has finite volume; it may be disconnected.

\medskip

Let~$K_0(v)$ denote the Iwahori subgroup of~$\PGL_2(\OL_{F,v})$, and let~$K_1(v)$ denote the pro-$v$ Iwahori,
which is the kernel of the map~$K_0(v) \rightarrow k^{\times}_v$.

\begin{df} Let~$R$ be an ideal of~$\OL_{F}$.
If we choose~$K$ to consist of the level structure~$K_0(v)$ for~$v|R$ and maximal level structure elsewhere,
then we write~$Y_0(R)$ for~$Y(K)$. If~$K$ has level~$K_0(v)$ for~$v|R$ and~$K_1(v)$ for~$v|Q$ for some auxiliary~$Q$,
we write~$Y_1(Q;R)$ for~$Y(K)$.
\end{df}

Given~$S$, we may similarly define~$S$-arithmetic locally symmetric spaces (directly following~\S3.6 and~\S4.4  of~\cite{CV}) as follows.
Let $\mathscr{B}_S$ be the product of the Bruhat--Tits buildings of $\PGL_2(F_v)$  for $v \in S$;
we regard each building as a contractible simplicial complex, and so $\mathscr{B}_S$ is a contractible square complex. 
In particular, $\mathscr{B}_S$ has a natural filtration:
$$\mathscr{B}_S^0 \subset \mathscr{B}_S^1 \subset \mathscr{B}_S^2 \subset \dots$$
where $\mathscr{B}_S^{(j)}$ comprises the union of cells of dimension $\leq j$. 
Consider the quotient 
$$Y(K[\frac{1}{S}])  :=  \G(F) \backslash \left( G_{\infty}/K_{\infty} \times \mathscr{B}_S \times \G(\A^{\infty,S})/ K^{S} \right).$$
This has a natural filtration by  spaces $Y_S^j$ defined by replacing $\mathscr{B}_S$
with $\mathscr{B}_S^j$. 
The space $Y_S^{j} - Y_S^{j-1}$ is a smooth manifold of dimension
$\dim(Y_{\{\infty\}})+ j$.  
 When~$K$ has type~$K_0(v)$ for~$v|R$, we write
$Y_0(R)[1/S]$  for these spaces, and, with additional level~$K_1(v)$ for~$v|Q$ and~$Q$ prime to~$R$ and~$S$,
we write~$Y_1(Q;R)[1/S]$. The cohomology of~$Y[1/S]$ and its covers will naturally recover spaces of automorphic
forms which are Steinberg at primes dividing~$S$. In order to deal with
representations which correspond to a quadratic unramified twist of the Steinberg representation, we need to introduce
a local system as follows.

\medskip

Let $\epsilon: S \rightarrow \{\pm 1\}$ be a choice of sign for every place $v \in S$. 
Associated to $\epsilon$ there is a natural character  $\chi_{\epsilon}  : \G(F) \rightarrow \{\pm 1\}$,
namely $\prod_{v \in S : \epsilon(v) = -1} \chi_v$; 
here $\chi_v$ is the
``parity of the valuation of determinant,'' 
obtained via the natural maps $$\G(F) \stackrel{\det}{\longrightarrow} F^{\times}/(F^{\times})^2 \rightarrow \prod_{v} F_v^{\times}/(F_v^{\times})^2 \  \stackrel{v}{\longrightarrow} \pm 1,$$
where the final map is the parity of the valuation.  Correspondingly, we obtain a {\em sheaf of $\OL$-modules}, denoted $\mathcal{F}_{\epsilon}$,  on the space $Y[1/S]$. Namely, the total space of the local system $\mathcal{F}_{\epsilon}$
corresponds to the quotient  of 
$$\left( G_{\infty}/K_{\infty} \times \mathscr{B}_S \times \G(\A^{\infty,S})) / K^S \right)$$
 by the action of $\G(F)$: the natural action on the first factor,
and the action via $\chi_\epsilon$ on the second factor.  Finally, let~$\mathcal{F}$ be the direct sum of~$\mathcal{F}_{\eps}$ over
all~$2^m = 2^{|S|}$ choices of sign~$\eps$.

\subsection{Local Systems}
For a pair~$(m,n)$ of integers at least two, one has the representation
$$\Sym^{m-2} \C^2 \otimes \overline{\Sym^{n-2} \C^2}$$
of~$\GL_2(\C)$. These representations give rise to local systems of~$Y[1/S]$ (and its covers) defined over~$\OL_F[1/S]$, and hence also 
to~$\OL$. Similarly, for any~$S$ and any~$\epsilon$ as above, there are corresponding local systems~$\LL$ obtained by tensoring
this local system with~$\mathcal{F}$.  

\begin{remark}[Amalgams] \label{remark:amalgam}
\emph{The structure of the groups~$\PGL_2(\OL_F[1/S])$ and its congruence subgroups for~$S = T \cup \{v\}$ as amalgam 
of~$\PGL_2(\OL_F[1/T])$ with itself over the Iwahori subgroup of level~$v$ implies, by the long exact sequence associated
to an amalgam, that there is an exact sequence:
{\small
$$\ldots  \rightarrow H_{n}(Y_1(Q;R)[1/T],\LL/\varpi^r)^2 \rightarrow H_n(Y_1(Q_N;R)[1/S],\LL/\varpi^r)
\rightarrow H_{n-1}(Y_1(Q_N;Rv)[1/T],\LL/\varpi^r)  \rightarrow  \ldots$$
}
This simple relationship between~$S$ arithmetic groups is special to the case~$n = 2$, and is crucial for our
inductive arguments.
}
\end{remark}

\begin{remark}[Orbifold Cohomology] 
\emph{ Whenever we write~$H_*(Y,\LL)$ for an orbifold~$Y$, we mean the cohomology as orbifold cohomology rather than the cohomology
of the underlying space. }
\end{remark}

\subsection{Hecke Operators}

We may define Hecke operators~$T_v$ for primes~$v$ not dividing~$S$ acting on~$H_*(Y_1(Q;R)[1/S], \LL)$ in the usual way.
For primes~$v|S$, one also has the operators~$U_v$.
The action of~$U_v$ on the cohomology of~$\mathcal{F}_{\eps}$ is by~$U_v = \eps(v) \in \{ \pm 1 \}$. More generally,  on~$H_*(Y_1(Q;R)[1/S],\LL)$,
we have (cf. the proof of Lemma~9.5 of~\cite{CG}):
$$U^2_{v} = U_{v^2} = 1.$$
For primes~$v|RQ$, there is also a Hecke operator we denote by~$U_v$. 
We denote by $\T_Q$ be the~$\OL$-algebra of endomorphisms generated by the action
of these Hecke operators on the direct sum of cohomology groups~$H_*(Y_1(Q;R)[1/S],\LL/\varpi^r)$ for any given~$m$, and let~$\m$ be a maximal ideal of~$\T$.

\section{Galois Representations}

Suppose that~$\LL$ has parallel weight~$(k,k)$ for some integer~$k \ge 2$. 
Our main assumption on the existence of Galois representations is as follows:

\begin{conjectureOrd}[Ordinary $\Rightarrow$ Ordinary]
\label{conj:A}
\label{conj:Ord} Assume that $\m$ is non-Eisenstein of residue characteristic~$p > 2$
and is associated to a Galois representation $\rhobar$,  and assume that~$T_v \notin \m$ for~$v|p$.
 Then there exists a continuous Galois representation
$\rho = \rho_{\m}:G_F \rightarrow \GL_2(\T_{Q,\m})$ with the following properties:
\begin{enumerate}
\item\label{char-poly} If $\lambda\not\in R \cup Q\cup\{v|p\} \cup S$ is a prime of $F$, then $\rho$ is unramified
at $\lambda$, and the characteristic polynomial of $\rho(\Frob_{\lambda})$ is
$$Y^2 - T_{\lambda} X + {\NF}(\lambda)^{k-1} \in \T_{Q,\m}[X].$$
\item For~$v|p$, the representation~$\rho | D_v$ is ordinary with eigenvalue the unit root of~$X^2 - T_v X + N(v)^{k-1}$.
\item If $v \in R$, then $\rho | I_v$ is unipotent.
\item If $v \in S$, then $\rho | I_v$ is unipotent, and moreover the characteristic polynomial of (any) lift of Frobenius
is
$$X^2 - U_v (N(v)^{k-1} + N(v)^k) + N(v)^{2k-1}.$$
\item\label{lgc-Q} If $v\in Q$,  the operators $T_{\alpha}$ for $\alpha \in F^{\times}_v \subset \A^{\infty,\times}_F$
are invertible. Let $\phi$ denote the character of $D_v = \Gal(\overline{F}_v/F_v)$ which, by class field theory,
is associated to the resulting homomorphism:
$$F^{\times}_v \rightarrow \T^{\times}_{Q,\m}$$
given by sending $x$ to $T_x$. By assumption, the image of $\phi \mod \m$ is unramified, and so
factors through $F^{\times}_v/\OL^{\times}_v \simeq \Z$, and so $\phi(\Frob_v) \mod \m$ is
well defined;  assume that $\phi(\Frob_v) \not\equiv \pm 1 \mod \m$.
Then $\rho|D_v \sim  \phi \eps \oplus \phi^{-1}$.
\item Suppose that~$k = 2$, and that the level is prime to~$v|p$. Then~$\rho |D_v$ is finite flat. \label{part:finite}
\end{enumerate}
\end{conjectureOrd}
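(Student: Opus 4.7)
The plan is to begin from Scholze's construction, which provides $\rho \colon G_F \to \GL_2(\T_{Q,\m}/I)$ for some nilpotent ideal $I$ and verifies part~(\ref{char-poly}) modulo $I$, and then to pin down the local behavior one prime at a time. Varma's local--global compatibility, applied to the cohomology of $Y_1(Q;R)[1/S]$ with the coefficient system $\LL$, already yields parts (3) and (4): the sheaf $\mathcal{F}$ enforces a Steinberg (or unramified-twist-of-Steinberg) local factor at $v \in S$, while the $K_0(v)$-structure at $v \in R$ forces unipotent ramification. For part (5), after localizing at $\m$ the Iwahori-fixed cohomology at a Taylor--Wiles prime $v \in Q$ decomposes into eigenlines for the operators $T_\alpha$; the hypothesis $\phi(\Frob_v) \not\equiv \pm 1 \pmod \m$ ensures the two eigencharacters stay distinct over $\T_{Q,\m}$, and Chebotarev plus Varma's compatibility identify $\rho|D_v$ with $\phi\eps \oplus \phi^{-1}$. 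Removing $I$ in these parts is then standard, since the required local conditions are closed and the Zariski density of the Frobenius trace data at unramified primes forces them to hold integrally.

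The substantive content lies in parts (2) and (6). My plan is to apply Hida's ordinary idempotent $e = \lim_n U_v^{n!}$ at each $v|p$ (well defined on $\m$-localized cohomology since $T_v$, and hence the unit-root eigenvalue of $U_v$ at Iwahori level, is a unit) and to interpret $e \cdot H_*(Y,\LL)_\m$ as the specialization at weight $k$ of a finitely generated module over an ordinary Hida--Hecke algebra $\T^{\mathrm{ord}}$ finite over a weight algebra $\Lambda$. Classical points of $\T^{\mathrm{ord}}$ correspond to regular algebraic cuspidal automorphic representations $\pi$ which are ordinary at $v|p$; by Harris--Lan--Taylor--Thorne / Scholze each such $\pi$ carries a Galois representation $\rho_\pi$. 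Granted that $\rho_\pi|D_v$ is ordinary with $\Frob_v$-eigenvalue the prescribed unit root of $X^2 - T_v X + N(v)^{k-1}$, ordinariness propagates along the Hida family by $p$-adic interpolation of the unramified quotient and its Frobenius eigenvalue, and specializing back to weight $k$ recovers part (2). Part (6) then follows from part (2), since in weight $2$ both graded pieces of the ordinary filtration have Hodge--Tate weights in $\{0,1\}$ and Raynaud/Fontaine--Laffaille criteria upgrade crystalline ordinariness to finite flatness.

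The hard step, and precisely the reason the statement must be left as a conjecture, is establishing ordinariness at $v|p$ for each individual classical $\rho_\pi$. Since Bianchi $3$-manifolds carry no Shimura-variety structure, one cannot read ordinariness off from the coherent cohomology of a $p$-divisible group; instead, $\rho_\pi$ is constructed by $p$-adic congruence with cuspidal classes on larger unitary groups, and one must show that the ordinary filtration is preserved along these congruences. A plausible route is to arrange a congruence between an ordinary Bianchi class and an ordinary automorphic form on a $U(n,n)$-type group, where Galois ordinariness is known by Geraghty and its subsequent refinements, and then to match unit-root eigenvalues across the congruence via the Hida idempotent; making this match precise, with sufficient control of the unit root, appears to require genuinely new input, and is exactly the obstacle that Conjecture~\ref{conj:Ord} encodes.
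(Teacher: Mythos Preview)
The paper does not prove this statement: it is stated as a \emph{conjecture} (Conjecture~Ord, also labeled Conjecture~A) and is assumed as a standing hypothesis for the main theorems. The introduction says explicitly that the required local--global compatibility at $v\mid p$ ``is still open'' and ``we do not resolve this issue here.'' So there is no ``paper's own proof'' to compare against; you correctly acknowledge this in your final paragraph.

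That said, two points about your write-up are worth flagging. First, your claim that ``removing $I$ in these parts is then standard'' overstates what is available. The conjecture as formulated asks for a representation into $\GL_2(\T_{Q,\m})$ itself, not into $\T_{Q,\m}/I$; the paper does \emph{not} treat eliminating the nilpotent ideal as routine, and in fact devotes all of Section~6 to building a workaround that allows the patching argument to proceed in the presence of $I$. Second, your sketch for parts~(1), (3), (4), (5) via Scholze and Varma is in line with what the paper cites as known input, and your identification of part~(2) (ordinariness at $v\mid p$) as the genuine open problem matches the paper's own assessment. Your suggested route through Hida families and congruences to forms on larger unitary groups is a reasonable heuristic, but as you say it requires new input; the paper makes no attempt in this direction and simply assumes the conjecture.
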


\begin{remark} \emph{If one
drops the assumption that~$T_v \not\in \m$ for~$v|p$ and
still assumes the corresponding version of assumption~\ref{part:finite}, one can also  
expect to prove a modularity lifting theorem in weight~$k = 2$ without an ordinary
hypothesis. However, it seems plausible that one might be able to prove  the weaker form of Conjecture~\ref{conj:A} without assuming 
the finite flatness condition.
If we drop this assumption, our arguments apply verbatim in all situations except when~$k = 2$ and~$\rhobar |_{D_v}$ for some~$v|p$ has the very special form
that it is finite flat but also admits non-crystalline semistable lifts. One may even be able to handle this case as well by a trick using Hida families (see 
Remark~\ref{remark:hida}) but we do not attempt to fill in the details.}
\end{remark}

\subsection{Assumptions}

Let~$k$ be a finite field of characteristic~$p$.
We shall assume, from now one, that the representation:
$$\rhobar_{\m}: G_{F} \rightarrow \GL_2(k)$$
satisfies all the hypotheses of Theorem~\ref{theorem:main}. In particular, it has determinant~$\eps^{k-1}$,
the restriction~$\rhobar |_{F(\zeta_p)}$ is absolutely irreducible, and there exist suitable collections of Taylor--Wiles primes.

\subsection{Patched Modules} \label{section:patched}

Using the methods of~\cite{CG}, we may patch together for any~$T$ and~$R$  (and any non-Eisenstein~$\m$) the homology 
groups~$H_*(Y_1(Q_N;R)[1/T],\LL/\varpi^N)$ to obtain a complex~$P_{\infty}$ such that:
\begin{enumerate}
\item $P_{\infty}$ is a perfect complex of finite~$S_{\infty}$-modules supported in
degrees~$m+2$ to~$1$, where~$S_{\infty} = W(k) \llbracket x_1,\ldots,x_{q} \rrbracket$ is
the patched module of diamond operators,  
where~$q-1$ is the dimension of the minimal adjoint Selmer group~$H^1_{\emptyset}(F,\ad^0(\rhobar))$,
and~$q$ is the dimension of the minimal dual Selmer group~$H^1_{\emptyset^*}(F,\ad^0(\rhobar)(1))$.
\item Let~$\a = (x_1,\ldots,x_{q})$ be the augmentation ideal of~$S_{\infty}$, and let~$\a_N
= ((1+x_1)^{p^n} - 1, \ldots, (1 + x_{q})^{p^n} - 1)$ be the ideal with~$S_{\infty}/\a_N = \Z_p[(\Z/p^n \Z)^q]$. Then
 $$H_*(P_{\infty} \otimes S_{\infty}/(\a_N,\varpi^N)) = H_*(Y_H(Q_N;R)[1/T],\LL/\varpi^N)_{\m}$$
for infinitely many sets of suitable Taylor-Wiles primes~$Q_N$ which are~$1 \mod p^n$, and~$Y_H$
is the quotient of~$Y_1$ which is a cover~$Y_0$ with Galois group~$\Delta = (\Z/p^n \Z)^{q}$. Moreover,
$$H_*(P_{\infty}) = \projlim H_*(Y_H(Q_N;R)[1/T],\LL/\varpi^N)_{\m}.$$
We denote these patched homology groups by~$\Htw_*(Y_0(R)[1/T],\LL)$.
\end{enumerate}
Note that we can do this construction with the addition of some auxiliary level structure, and also
simultaneously for any finite set of different auxiliary level structures.

\section{The Galois action in low degrees}

Let~$\t_{Q,\m}$ be the quotient of~$\T_{Q,\m}$ which acts faithfully in degrees~$ \le m$, namely
on $$\bigoplus_k \bigoplus_{i \le m} H_i(Y_1(Q_N;R)[1/S],\LL/\varpi^k)_{\m}.$$

\begin{prop} \label{prop:st} There exists an integer~$k$ depending only on~$m  = |S|$ such that
there exists a representation
$$\rho^{\t}: G_{F} \rightarrow \GL_2(\t_{Q,\m}/I)$$
where~$I^k = 0$ and such that $\rho^{\t}$ is Steinberg or unramified quadratic twist of Steinberg at primes dividing~$S$.
\end{prop}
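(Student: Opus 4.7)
The plan is to prove Proposition~\ref{prop:st} by induction on $m = |S|$, using the amalgam long exact sequence of Remark~\ref{remark:amalgam} to reduce the construction of Galois representations on the $S$-arithmetic Hecke algebra to the classical arithmetic case treated by Scholze's theorem (combined with Varma's local--global compatibility at $v \nmid p$). At each inductive step the degree of nilpotence in $I$ may double, yielding $I^{k(m)} = 0$ for a constant $k(m)$ depending only on $m$.

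For the base case $m = 0$, the space $Y_1(Q_N;R)[1/\emptyset] = Y_1(Q_N;R)$ is a standard arithmetic manifold, and Scholze's theorem produces a Galois representation $G_F \to \GL_2(\T_{Q,\m}/J)$ with $J^{k_0} = 0$ for a universal constant $k_0$. Passing to the quotient $\t_{Q,\m}$ yields the statement, with no Steinberg condition to verify.

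For the inductive step, write $S = T \cup \{v\}$ with $|T| = m-1$ and invoke the amalgam exact sequence, which is Hecke-equivariant away from $v$ and also for $U_v$. For $n \le m$ this places the middle term in a short exact sequence
\begin{equation*}
0 \to A \to H_n(Y_1(Q_N;R)[1/S],\LL/\varpi^r)_\m \to B \to 0,
\end{equation*}
with $A$ a quotient of $H_n(Y_1(Q_N;R)[1/T],\LL/\varpi^r)_\m^{2}$ and $B$ a subobject of $H_{n-1}(Y_1(Q_N;Rv)[1/T],\LL/\varpi^r)_\m$. The inductive hypothesis, in an appropriately strengthened form covering all degrees contributing to the SES on the $T$-side, produces Galois representations on the Hecke algebras acting on $A$ and $B$ modulo nilpotent ideals of degree $k(m-1)$, already satisfying the Steinberg condition at primes of $T$. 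Passing to the induced pseudo-characters and gluing across the extension --- using that any operator in $\t_{Q,\m}$ acting trivially on both $A$ and $B$ squares to zero on the extension --- produces a pseudo-character on $\t_{Q,\m}$ modulo a nilpotent ideal of degree at most $2k(m-1)$. Residual absolute irreducibility (since $\m$ is non-Eisenstein) then allows Chenevier--Nyssen-type arguments to lift this to an actual representation $\rho^{\t}: G_F \to \GL_2(\t_{Q,\m}/I)$ with $I^{k(m)} = 0$.

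The Steinberg condition at $v$ is the heart of the inductive step. Inertia at $v$ acts unipotently on $\rho^{\t}$, since $A$ is unramified at $v$ (as $v \notin T$) while $B$ sits in Iwahori-level cohomology at $v$ where $\rho|_{I_v}$ is unipotent by Varma. For the Frobenius characteristic polynomial, the key input is the relation $U_v^2 = 1$ in $\t_{Q,\m}$, which holds because $U_v = \eps(v) \in \{\pm 1\}$ on each $\mathcal{F}_\eps$. Combined with the Iwahori local--global compatibility for the $B$-piece --- identifying one Frobenius eigenvalue with $U_v$ up to a controlled unit --- this forces the characteristic polynomial on $\t_{Q,\m}/I$ to take the Steinberg form $X^2 - U_v(N(v)^{k-1} + N(v)^k)X + N(v)^{2k-1}$. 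The hardest part is reconciling this with the trace coming from $A$: there the natural description involves $T_v$ (which acts since $v \notin T$), and one must verify the level-raising congruence $T_v \equiv U_v(1+N(v))$ modulo $I$ on the image of the amalgam boundary map. This is a form of Ihara's lemma in the present $S$-arithmetic setup, and it is what forces the doubling of nilpotence in passing from $k(m-1)$ to $k(m)$.
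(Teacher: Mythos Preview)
Your proposal follows the same inductive strategy as the paper: induct on $m = |S|$, write $S = T \cup \{v\}$, and use the amalgam exact sequence of Remark~\ref{remark:amalgam} to sandwich $H_n(Y_1(Q_N;R)[1/S],\LL/\varpi^r)_\m$ between a quotient $A$ of the spherical-at-$v$ $T$-arithmetic cohomology and a subobject $B$ of the Iwahori-at-$v$ $T$-arithmetic cohomology in one degree lower. Both arguments use the relation $U_v^2 = 1$ on the middle term and conclude by gluing with an increase in nilpotence; your explicit discussion of pseudo-characters, the Chenevier--Nyssen lift, and the doubling $k(m) \le 2k(m-1)$ makes precise what the paper compresses into ``after possibly increasing the ideal of nilpotence.''

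The one substantive difference is in how the Steinberg condition at the new prime $v$ is verified. The paper argues directly that $U_v^2 = 1$ on the middle forces the Frobenius eigenvalues at $v$ on the image of the left-hand map to be $\pm N(v)^{k-1}$ and $\pm N(v)^k$ (the sign being determined by~$\rhobar$ since $p \ne 2$), and that the right-hand term is likewise Steinberg at $v$; gluing then gives the Steinberg condition on the middle. You instead isolate the key step as a level-raising congruence $T_v \equiv U_v(1+N(v))$ on $A$ and describe it as ``a form of Ihara's lemma.'' This is the same underlying fact, but your framing imports more than is needed: no Ihara-type surjectivity is invoked in the paper's proof, and the congruence on $A$ is obtained from the explicit compatibility of the $S$-arithmetic $U_v$ with the spherical $T_v$ under the amalgam degeneracy maps, combined with the scalar action $U_v = \eps(v)$ built into the local system~$\mathcal{F}_\eps$. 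Your last paragraph is therefore correct in spirit but overcomplicates a step that the paper handles by a direct (if terse) Hecke-operator identity.
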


\begin{proof}
We proceed by induction. 
 Suppose that $S = vT$, where~$T$ has~$m-1$ prime divisors.
From the amalgam sequence of Remark~\ref{remark:amalgam}, we find that
 there is an exact sequence:
$$H_{n}(Y_1(Q_N;R)[1/T],\LL/\varpi^r)^2_{\m} \rightarrow H_n(Y_1(Q_N;R)[1/S],\LL/\varpi^r)_{\m}
\rightarrow H_{n-1}(Y_1(Q_N;Rv)[1/T],\LL/\varpi^r)_{\m}.$$
We have~$U^2_v - 1 = 0$ for $v|S$ on $H_*(Y_1(Q_N;R)[1/S],\LL/\varpi^r)$. It follows that, for
the Galois representation associated to the image of the LHS, the eigenvalues of $\Frob_v$ are precisely $N(v)^{k-1}$ and $N(v)^k$, or~$-N(v)^{k-1}$
and~$-N(v)^{k}$, depending only on~$\rhobar$ (note that~$p \ne 2$, so the eigenvalue of~$U_v \in \{ \pm 1\}$ is
determined by~$\rhobar$). Moreover, by induction, the Galois representation associated to the RHS is Steinberg at~$v$.
Hence, again after possibly increasing the ideal of nilpotence,  it follows
that the middle term also gives rise to a Steinberg representation.
\end{proof}

\medskip

The key part of the argument is to show that the action of Galois in low degrees is unramified ``up to a small error.''

Following~\cite{CG}, we may, by finding suitably many sequences of Taylor--Wiles primes, patch
all these homology groups (localized at~$\m$) for all time. (We need only work with a finite fixed
set of auxiliary level structures.) The corresponding patched modules will be, assuming local--global compatibility conjectures,  modules over a framed local deformation ring~$\Rloc$, which will be a power series over the tensor product of local framed deformation
rings~$R_v$ for~$v|RS$.
We choose the local deformation ring~$R_v$ for~$v|p$ to be the ordinary crystalline deformation ring.
This coincides with the ordinary deformation ring unless~$k = 2$ and the semi-simplification of~$\rhobar|D_v$ is a twist of~$\eps \oplus 1$.
In the former case, the ordinary deformation ring is irreducible. In the latter case, the additional finite flat condition also
means that~$R_v$ is irreducible.
The local deformation rings~$R_v$ for~$v|S$ have two components corresponding to the unramified and Steinberg
representations respectively, and two corresponding equi-dimensional quotients
$R^{\st}_v$ and~$R^{\ur}_v$. Their intersection~$R^{\st,\ur}_v$ is also equi-dimensional 
with~$\dim(R^{\st,\ur}_v) = \dim(R^{\st}_v) - 1 = \dim(R^{\ur}_v) - 1$.
The ring~$\Rloc$ correpsondingly has~$2^m$ quotients on which one chooses a component of~$R_v$ for~$v|S$.
The common quotient~$\Rloc^{\st,\ur}$
has dimension~$\dim(\Rloc) - m$.

\medskip

The patched modules~$ \Htw_{i}$ are also naturally modules over a patched ring of diamond operators~$S_{\infty}
= \OL \llbracket x_1, \ldots ,x_{q} \rrbracket$.
In the context of~\cite{CG}, we have~$\ell_0 = 1$, or that~$\dim(S_{\infty}) = \dim(\Rloc) - 1$.
 We  have an exact sequence as follows:
$$\ldots \Htw_{i}(Y_1(Q_N;Rv)[1/T]) \rightarrow \Htw_{i}(Y_1(Q_N;R)[1/T])^2 \rightarrow
\Htw_{i}(Y_1(Q_N;R)[1/S]) \rightarrow \Htw_{i-1}(Y_1(Q_N;Rv)[1/T]) \ldots.$$

For a finitely generated~$S_{\infty}$-module~$M$, let the co-dimension of~$M$ denote the co-dimension
of the support of~$M$ as an~$S_{\infty}$-module.

\begin{prop} \label{prop:keyestimate} Let~$S$ be divisible by~$m$ primes. We have the following estimate:
$$\codim_{S_{\infty}} \Htw_{i}(Y_0(R)[1/S]) \ge \begin{cases} m-i + 3 & i \le m \\ 1, & i = m+1. \end{cases}$$
\end{prop}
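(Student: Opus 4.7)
The plan is to argue by induction on $m = |S|$, combining the patched amalgam exact sequence of Remark~\ref{remark:amalgam} with the Steinberg Galois structure of Proposition~\ref{prop:st}.

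For the base case $m = 0$, the only nontrivial bound is $\codim_{S_\infty} \Htw_1(Y_0(R)) \ge 1$: this is the standard $\ell_0 = 1$ codimension estimate for patched Bianchi complexes from \cite{CG}. The patched complex is perfect of length two over $S_\infty$, so $\Htw_1$ is the cokernel of a boundary map of free modules of equal rank and hence torsion of codimension at least one.

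For the inductive step, write $S = T \cup \{v\}$ with $|T| = m - 1$. Patching the amalgam exact sequence yields
$$\cdots \to \Htw_n(Y_0(Rv)[1/T]) \xrightarrow{\alpha_n} \Htw_n(Y_0(R)[1/T])^2 \xrightarrow{\beta_n} \Htw_n(Y_0(R)[1/S]) \xrightarrow{\gamma_n} \Htw_{n-1}(Y_0(Rv)[1/T]) \to \cdots,$$
from which one extracts the short exact sequence
$$0 \to \mathrm{coker}(\alpha_n) \to \Htw_n(Y_0(R)[1/S]) \to \ker(\alpha_{n-1}) \to 0.$$
The submodule $\ker(\alpha_{n-1}) \subseteq \Htw_{n-1}(Y_0(Rv)[1/T])$ inherits its codimension bound directly from the inductive hypothesis, and the index shift $n \mapsto n-1$, $m \mapsto m-1$ matches the required target exactly. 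For the quotient $\mathrm{coker}(\alpha_n)$ the naive bound from its quotient structure alone is one short of the needed estimate; the improvement comes from combining the inductive support information with the Galois-theoretic constraint. As a submodule of $\Htw_n(Y_0(R)[1/S])$ it carries, by Proposition~\ref{prop:st}, a Galois representation that is Steinberg at $v$ (modulo a fixed nilpotent ideal), while as a quotient of $\Htw_n(Y_0(R)[1/T])^2$ it carries one that is unramified at $v$. Hence the support of $\mathrm{coker}(\alpha_n)$ lies on the intersection $R_v^{\st,\ur}$, which by the excerpt satisfies $\dim R_v^{\st,\ur} = \dim R_v^\ur - 1 = \dim R_v^\st - 1$. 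Under the $\ell_0 = 1$ identity $\dim S_\infty = \dim \Rloc - 1$, this codimension-one intersection adds at least one to the codimension of $\mathrm{coker}(\alpha_n)$ beyond the naive bound, recovering the sharper estimate $m - n + 3$ in the expected range.

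The principal technical obstacle is rigorously establishing that the Galois-theoretic and inductive codimension contributions combine additively, rather than merely as a maximum: this requires a transversality between the inductively controlled support of $\Htw_n(Y_0(R)[1/T])$ and the subvariety cut out by $R_v^{\st,\ur}$, which one obtains from the Cohen--Macaulay equidimensional structure of the local deformation components together with the fact that the patched module structures factor through $\Rloc$-quotients of known dimension. A related delicate point arises at the boundary of the inductive range, particularly at $n = m$, where the $|T|$-patched module only enjoys the weaker $i = |T| + 1$ bound of codimension one; to recover the required gain one must either propagate the Steinberg improvement across an additional degree of the amalgam sequence or exploit the perfect-complex structure of $P_\infty$ over $S_\infty$ to extract an auxiliary codimension bound which, combined with the intersection estimate, closes the induction at the edge.
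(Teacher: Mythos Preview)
Your approach has the right ingredients but misses the key idea and, as you yourself flag, leaves two genuine gaps: the transversality needed to add codimensions, and the boundary case $n = m$. The paper's proof sidesteps both by a different organization. Rather than trying to combine an inductive codimension bound on $\Htw_n(Y_0(R)[1/T])$ with a single extra codimension coming from $R_v^{\st,\ur}$, the paper works in the Serre quotient category of $S_\infty$-modules modulo those of codimension $\ge m-i+3$ and argues by contradiction. In that category the inductive hypothesis kills $\Htw_{i-1}(Y_0(vR)[1/T])$, so the map $\Htw_i(Y_0(R)[1/T])^2 \to \Htw_i(Y_0(R)[1/S])$ is a surjection there, and hence the Galois representation on $\Htw_i(Y_0(R)[1/S])$ is unramified at $v$. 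Now run this over \emph{every} decomposition $S = vT$: the representation is unramified at \emph{all} $v \mid S$. Combined with Proposition~\ref{prop:st} (Steinberg at all $v \mid S$, up to nilpotents which do not affect support), the module lives on the full intersection $\Rloc^{\st,\ur}$, which has $\dim(\Rloc^{\st,\ur}) = \dim(S_\infty) - m - 1$. So $\codim \Htw_i \ge m+1$, which exceeds $m-i+2$ as soon as $i \ge 2$, giving the contradiction outright with no transversality argument and no delicate boundary.

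The cases $i = 0$ and $i = 1$ are handled separately and not by induction: $H_0$ is Eisenstein so vanishes after localizing at $\m$, and $H_1$ vanishes after localization by the congruence subgroup property for $\PGL_2(\OL_F[1/S])$ (Serre). Your base case $m = 0$ does not cover these, and your inductive mechanism cannot reach them either since the $\Rloc^{\st,\ur}$ bound $m+1$ only beats $m-i+2$ for $i \ge 2$. The moral: do not try to add one codimension at a time; instead, use the induction only to propagate unramifiedness, and then cash in all $m$ primes at once against the equidimensional structure of $\Rloc^{\st,\ur}$.
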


\begin{proof}
The claim  for~$i = m+1$ follows by considering dimensions of deformation rings, because these modules are finite over~$\Rloc$.
For For~$i \le m$, we proceed via induction on~$m$. Write~$S = vT$, where~$T$ has~$m-1$ prime
factors.
There is an exact sequence:
 $$ \Htw_{i}(Y_0(R)[1/T])^2 \rightarrow
\Htw_{i}(Y_0(R)[1/S]) \rightarrow \Htw_{i-1}(Y_0(vR)[1/T]).$$
Assuming that~$i \le m$, we have~$i-1 \le m-1$. 
In the Serre category of~$S_{\infty}$-modules
modulo those of
 co-dimension at least~$(m-1) - (i-1) + 3 = m -i + 3$, we therefore have a surjection:
 $$ \Htw_{i}(Y_0(R)[1/T])^2 \rightarrow
\Htw_{i}(Y_0(R)[1/S]).$$
This implies that the Galois representation associated to the latter module is, (in this category)  unramfied at~$v$; and, using other~$v$, 
for all~$v|S$. 
It suffices to show that the RHS is zero, or equivalently, that it does not have co-dimension at 
most~$m-i+2$. We would  like to claim that,
by Proposition~\ref{prop:st}, 
the action of~$\Rloc$ in these degrees factors through the quotient
$\Rloc^{\st}$. This is not precisely true, since Proposition~\ref{prop:st} only says the Galois representation
is Steinberg after taking the quotient by a nilpotent ideal. If~$M$ is an~$S_{\infty}$-module, then
the support of~$M/J$ for a nilpotent ideal~$J$ will be the same as the support of~$M$
(see also the discussion in~\S\ref{section:nilpotent}). Hence, passing
to a suitable quotient of~$\Htw_{i}$, we may assume the module acquires an action of~$\Rloc$
which factors through~$\Rloc^{\st}$.
 Yet by what we have just shown above, the corresponding Galois representations
are also unramified at~$v|S$, and so are quotients of~$\Rloc^{\st,\ur}$.
Since~$\dim(\Rloc^{\st,\ur}) = \dim(\Rloc) - m =  \dim(S_{\infty}) - m -1$, we deduce that~$\Htw_i$
has co-dimension at least
$$m+1 > m - i + 2$$
providing that~$i \ge 2$.  If~$i = 0$, the module is trivial, because~$\m$ is not Eisenstein and~$H_0$ is Eisenstein.
If~$i = 1$, we are done by the congruence subgroup property,  which also  implies that~$H_i$ vanishes
after localization at~$\m$.
\end{proof}

\section{Level Raising}

\subsection{Ihara's Lemma and the level raising spectral sequence}

We recall some required constructions and results from~\cite{CV}.
The following comes from Chapter~IV of~\cite{CV}.
Let $S = T \cup \{v\}$. 
Let~$\LL$ be a local system (which could be torsion). We assume that~$\LL/\varpi$ is self-dual. For example, we could take~$\LL = \OL/\varpi^k$ for some~$k$.

Let~$Y = Y(K)$ for some~$K$ of level prime to~$S$.
Let~$\m$
be a maximal ideal of~$\T$.  

\begin{lemma}[Ihara's Lemma] 
If $\m$ is not Eisenstein, then
$$H_1(Y_0(v)[1/T],\LL)_{\m} \rightarrow H_1(Y[1/T],\LL)^2_{\m}$$
is surjective.
\end{lemma}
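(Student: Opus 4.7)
The strategy is to derive surjectivity from the amalgam long exact sequence of Remark~\ref{remark:amalgam}, thereby reducing the question to the vanishing of an $H_1$ at a non-Eisenstein ideal, and then to supply that vanishing via Serre's congruence subgroup property. Applying Remark~\ref{remark:amalgam} with $S = T \cup \{v\}$, with $R$ as in the lemma (so coprime to $v$), and with no auxiliary $Q$-level, then localizing at the non-Eisenstein ideal $\m$, one obtains
\begin{equation*}
H_2(Y[1/S],\LL)_\m \to H_1(Y_0(v)[1/T],\LL)_\m \to H_1(Y[1/T],\LL)^2_\m \to H_1(Y[1/S],\LL)_\m.
\end{equation*}
The standard identification of the connecting homomorphisms in the amalgam sequence shows that the middle arrow is, up to sign, precisely the pair of degeneracy maps from $Y_0(v)$ to $Y$. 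Surjectivity of the Ihara map is therefore equivalent to the rightmost arrow being zero, and a fortiori it suffices to prove
\begin{equation*}
H_1(Y[1/S],\LL)_\m = 0.
\end{equation*}

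For the vanishing I would invoke Serre's congruence subgroup property for $\PGL_2(\OL_F[1/S])$, which holds whenever $S$ is non-empty; here $v \in S$, so CSP applies. This is exactly the input used for the ``$i = 1$'' case in the proof of Proposition~\ref{prop:keyestimate}: after localizing at a non-Eisenstein maximal ideal, CSP implies that $H_1$ of the $S$-arithmetic quotient carries no non-Eisenstein classes, which (in characteristic zero) gives the required vanishing.

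To pass from characteristic zero to a torsion coefficient system $\LL$ (such as $\LL = \OL/\varpi^r$), I would use induction on $r$ via the short exact sequence $0 \to \LL \xrightarrow{\varpi} \LL \to \LL/\varpi \to 0$, with base case $r = 1$ handled by invoking the self-duality hypothesis on $\LL/\varpi$ to transport the vanishing between $H_1$ and a complementary cohomology group by Poincar\'e duality on the $S$-arithmetic space. The main obstacle is precisely this last step: verifying that the CSP-driven vanishing operates uniformly at all torsion levels and at arbitrary auxiliary level $R$, compatibly with the Hecke action so that localization at $\m$ commutes with the passage to limits in the CSP-style statement. This is essentially the bookkeeping worked out in Chapter~IV of~\cite{CV}, and I expect the argument there to transfer verbatim after matching up the amalgam structure used here with that of loc.~cit.
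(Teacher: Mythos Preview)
Your reduction via the amalgam sequence to the vanishing of $H_1(Y[1/S],\LL)_{\m}$, followed by an appeal to Serre's CSP for $\PGL_2(\OL_F[1/S])$, is exactly the paper's argument.

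The only point to correct is your final paragraph: no passage from characteristic zero to torsion coefficients is required, and the Poincar\'e duality man\oe uvre you sketch would not work as stated (on $Y[1/S]$, which has dimension $3+|S|$, duality does not pair $H_1$ with anything useful here). The CSP is a statement about the group itself: the congruence kernel of $\PGL_2(\OL_F[1/S])$ has order dividing $|\mu_F|$, and since $p>2$ is unramified in $F$ this order is prime to $p$. Hence the congruence kernel contributes nothing to $H_1$ with \emph{any} $\Z_p$-module coefficients, and one computes directly (this is the ``easy computation'' of \cite{CV}, \S 4) that the remaining $H_1$ is Eisenstein. This holds uniformly for $\LL/\varpi^r$ and at arbitrary auxiliary level without induction or duality; the missing arithmetic input in your write-up is precisely the observation $p \nmid |\mu_F|$.
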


\begin{proof}  It suffices to show that~$H_1(Y[1/S],\LL)_{\m}$ for~$S = Tv$
 is trivial. From the amalgam sequence~\ref{remark:amalgam}, we see the cokernel is  a quotient of the group
 $H_1(Y[1/S],\LL)_{\m}$, and hence it suffices to show that this is trivial.
The homology of~$Y[1/S]$ can be written as the direct sum of the homologies of~$S$-arithmetic groups commensurable
with~$\GL_2(\OL_F[1/S])$, and, by~\cite{CSP}, these groups satisfy the congruence subgroup property
(this crucially uses the fact that~$S$ is divisible
by at least one prime~$v$, and that the lattice is non-cocompact).  The congruence kernel has order dividing the group
of roots of unity~$\mu_F$. Since~$p > 2$ is unramfied in~$F$, this is trivial after tensoring with~$\Z_p$. An easy computation then shows that
the relevant  cohomology group is Eisenstein.  (See~\cite{CV},~\S~4.) \end{proof}

\medskip

In order to prove the required level raising result (Theorem~\ref{theorem:raising}), we also
need the level raising spectral sequence of~\cite{CV} (Theorem~4.4.1).
If $\m$ is non-Eisenstein, then the $E^1$-page of the spectral sequence is:
\begin{equation} \label{eq:spectral}
\begin{diagram}
0   & \lTo & \ldots & \lTo & \qquad 0 \qquad &\lTo & \qquad 0 \qquad  \\
H_2(Y,\LL)^{2^{|S|}}_{\m}  & \lTo & \ldots & \lTo & \bigoplus_{v|S} H_2(Y_0(S/v),\LL)^2_{\m}
& \lTo & H_2(Y_0(S),\LL)_{\m} \\
H_1(Y,\LL)^{2^{|S|}}_{\m}  & \lTo & \ldots & \lTo & \bigoplus_{v|S} H_1(Y_0(S/v),\LL)^2_{\m}
& \lTo & H_1(Y_0(S),\LL)_{\m} \\
0   & \lTo & \ldots & \lTo & \qquad 0 \qquad &\lTo & \qquad 0 \qquad  \\
\end{diagram}
\end{equation}
The vanishing of the zeroeth  and third row follow from the assumption that~$\m$ is not Eisenstein.
This spectral sequence converges to $H_*(Y[1/S],\LL)_{\m}$. Tautologically, it degenerates
on the~$E^2$-page. After tensoring with~$\Q$, the sequences above are exact at  all but the final term, corresponding to the fact
that $H_*(Y[1/S],\LL)_{\m} \otimes \Q$ vanishes outside degrees $[m+1,m+2]$.

\medskip

We now establish a level--raising result.

\begin{theorem} \label{theorem:raising} Let $\m$ be a non-Eisenstein maximal ideal  of $\T$ with
 residue field~$k$ of characteristic~$p$.
 Let~$S$ be a product of~$m$ primes $v$ so that 
$T^2_v - (1 + N(v))^2 \in \m$.
Then 
 $$H_*(Y,\LL/\varpi)_{\m} \ne 0 \Rightarrow H_{*}(Y[1/S],\LL/\varpi)_{\m} \ne 0.$$
 \end{theorem}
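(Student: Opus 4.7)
The strategy is to apply the level-raising spectral sequence~\eqref{eq:spectral}, whose $E^1$-page consists of the groups $H_i(Y_0(T),\LL/\varpi)_{\m}$ for $T\subset S$ and $i\in\{1,2\}$ (other rows vanish after localization at a non-Eisenstein~$\m$), and which converges to $H_*(Y[1/S],\LL/\varpi)_{\m}$. Since the spectral sequence degenerates at $E^2$, it suffices to exhibit some $E^2_{p,q}\neq 0$. The core computation is this: on the ``oldforms'' subsystem obtained via iterated degeneracy maps $H_i(Y,\LL/\varpi)_{\m}\hookrightarrow H_i(Y_0(T),\LL/\varpi)_{\m}$, the differentials are governed at each prime $v\in S$ by the Hecke matrix
$$M_v=\begin{pmatrix}1+N(v) & T_v\\ T_v & 1+N(v)\end{pmatrix},$$
whose determinant $(1+N(v))^2-T_v^2$ lies in $\m$ by hypothesis. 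Since the entries commute, $M_v$ diagonalizes as $(a+b)\oplus(a-b)$ with $a=1+N(v)$ and $b=T_v$; exactly one block is a unit modulo $\m$, and the other vanishes there. Reducing mod $\m$, the oldforms subcomplex becomes a K\"unneth-type tensor product over $k$ of $m$ length-two complexes each with rank-one differential, and the K\"unneth formula produces a non-zero class in its total cohomology whenever $H_i(Y,\LL/\varpi)_{\m}/\m\neq 0$ for some~$i$.

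I would then carry out the argument inductively on $m=|S|$, using the amalgam long exact sequence of Remark~\ref{remark:amalgam} to reduce the inductive step to the case $m=1$ applied to the pair $(Y[1/T],v)$ for $S=T\cup\{v\}$. The case $m=1$ is essentially Ribet's level-raising theorem: assuming $H_*(Y[1/S],\LL/\varpi)_{\m}=0$ forces each degeneracy map $f_n\colon H_n(Y_0(v)[1/T])_{\m}\to H_n(Y[1/T])^2_{\m}$ to be an isomorphism, and then the factorization $f_n\circ g_n=M_v$ (where $g_n$ is the corresponding transfer) combined with $\det M_v\in\m$ and Ihara's lemma (already established in the excerpt via the congruence subgroup property) produces a contradiction.

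The main obstacle is rigorously upgrading the ``mod $\m$'' obstructions produced by the matrix $M_v$ into genuine non-vanishing statements at the level of $\T_{\m}$-modules: a naive determinant computation shows only that $M_v$ is non-invertible mod $\m$, and one needs to verify that the corresponding newforms classes do not cancel out when passed from the oldforms subcomplex to the full $E^2$-page. I would combine Nakayama's lemma with the Poincar\'e-duality identity $\dim_k H_1(Y,\LL/\varpi)_{\m}=\dim_k H_2(Y,\LL/\varpi)_{\m}$ (valid after non-Eisenstein localization on the 3-orbifold~$Y$), together with a careful Hecke-equivariant analysis of the oldforms/newforms decomposition, to close the argument.
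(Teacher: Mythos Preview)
Your proposal assembles all the right ingredients --- the level-raising spectral sequence, the degeneracy matrix $M_v$ with $\det M_v\in\m$, Ihara's lemma, and Poincar\'e duality --- but the two strategies you outline both have genuine gaps, and the paper's proof avoids them by a cleaner route.

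\medskip

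\textbf{The K\"unneth/oldforms approach.} You correctly identify the obstacle yourself: the oldforms subcomplex is not in general a direct summand of the $E^1$-page, so a non-zero class in its cohomology need not survive in $E^2$. Your proposed fix (Nakayama plus oldforms/newforms analysis) is too vague to close this; over $\LL/\varpi$ there is no clean oldforms/newforms decomposition, precisely because the level-raising congruences you are trying to exploit obstruct it.

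\textbf{The inductive approach.} Reducing to ``$m=1$ applied to the pair $(Y[1/T],v)$'' is more problematic than it looks. Your $m=1$ argument implicitly uses Poincar\'e duality on the $3$-dimensional orbifold $Y$ to relate $H_1$ and $H_2$ (this is how one knows that the transfer map $g_n$ dual to $f_{3-n}$ is forced to be an isomorphism). But $Y[1/T]$ has dimension $3+|T|$ and is not a manifold; its cohomology after localization at $\m$ lives in a range of degrees, and there is no simple two-degree duality statement to invoke. So the inductive step does not reduce to the base case in the way you suggest.

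\medskip

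\textbf{What the paper does instead.} The paper gives a single direct argument for all $m$, with no induction. The key observation is that the upper-right corner $E^2_{m,2}$ is simply the \emph{kernel} of the first differential
\[
d^1\colon H_2(Y_0(S),\LL/\varpi)_{\m}\longrightarrow \bigoplus_{v\mid S} H_2(Y_0(S/v),\LL/\varpi)^2_{\m},
\]
with no quotient involved, and it survives unchanged to $E^\infty$. Assuming $H_{m+2}(Y[1/S],\LL/\varpi)_{\m}=0$ forces $d^1$ to be injective. Now apply Poincar\'e duality \emph{on the $3$-dimensional spaces $Y_0(T)$} (not on $Y[1/S]$) together with the non-Eisenstein identification $H^1_c\simeq H^1$ to dualize: one obtains a surjection
\[
\bigoplus_{v\mid S} H_1(Y_0(S/v),\LL/\varpi)^2_{\m}\twoheadrightarrow H_1(Y_0(S),\LL/\varpi)_{\m}.
\]
Composing with the Ihara surjection $H_1(Y_0(S),\LL/\varpi)_{\m}\twoheadrightarrow H_1(Y,\LL/\varpi)^{2^m}_{\m}$ gives a surjection whose $v$-th summand, after untwisting by Fricke involutions, factors through the Hecke operator $D_v=T_v\pm(1+N(v))\in\m$. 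Hence the composite lands in $\m\cdot H_1(Y,\LL/\varpi)^{2^m}_{\m}$, contradicting surjectivity unless $H_1(Y,\LL/\varpi)_{\m}=0$.

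The point is that by focusing on the corner term one works only with a kernel, so the ``cancellation in $E^2$'' issue never arises; and by dualizing via Poincar\'e duality on the $3$-folds $Y_0(T)$ rather than on any $S$-arithmetic space, one never needs duality in the higher-dimensional setting.
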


 \begin{proof} 
 Consider the spectral sequence of~\cite{CV} in equation~\ref{eq:spectral} above.
 It is clear that the upper right hand corner term remains unchanged after one
reaches the~$E^2$-page. Assuming, for the sake of contradition, that 
$H_{m+2}(Y[1/S],\LL/\varpi)_{\m}$ vanishes, it follows that
 the map
$$H_2(Y_0(S),\LL/\varpi)^2_{\m} \rightarrow \bigoplus_{v|S} H_2(Y_0(S/v),\LL/\varpi)_{\m}$$
is injective. By Poincar\'{e} duality, there is an isomorphism
$H_2(Y,\LL/\varpi)_{\m} \simeq H^1_c(Y,\LL/\varpi)_{\m}$. Here we use the fact that~$\LL/\varpi$ is a self-dual local system.
 Because~$\m$ is non-Eisenstein,
 there
is an isomorphism between $H^1_c(Y,\LL/\varpi)_{\m}$ and $H^1(Y,\LL/\varpi)_{\m}$.
Finally, by the universal coefficient theorem, $H^1(Y,\LL/\varpi)_{\m}$ is dual to $H_1(Y,\LL/\varpi)_{\m}$.
Hence taking the dual of the injection above yields  the surjection:
$$\bigoplus_{v|S} H_1(Y_0(S/v),\LL/\varpi)^2_{\m} \rightarrow H_1(Y_0(S),\LL/\varpi)_{\m}.$$
It suffices to show that this results in a contradiction.
By Ihara's lemma, it follows that the composite map
$$\bigoplus_{v|S} H_1(Y_0(S/v),\LL/\varpi)^2_{\m} \rightarrow H_1(Y_0(S),\LL/\varpi)_{\m}
\rightarrow H_1(Y,\LL/\varpi)^{2^{m}}_{\m}$$
is also surjective. 
Our assumption is that, for some choice of signs, the elements
$D_v = T_v \pm (1 + N(v)) \in \m$ for all $v | S$.  
The map above decomposes into a sum of maps from each individual term, each of
which factor as follows
$$H_1(Y_0(S/v),\LL/\varpi)^2_{\m} \rightarrow H_1(Y_0(S),\LL/\varpi)_{\m}
\rightarrow H_1(Y_0(S/v),\LL/\varpi)^2_{\m}
\rightarrow H_1(Y,\LL/\varpi)^{2^{m}}_{\m}$$
An alternative description of this map can be given by replacing every
pair of groups by a single term, and replacing the two natural degeneracy maps
with either the sum or difference of these maps (depending on a sequence
of choice of Fricke involutions, which depend on the sign occurring in~$D_v$)
we end up with a map of the form:
$$H_1(Y_0(S/v),\LL/\varpi)_{\m} \rightarrow H_1(Y_0(S),\LL/\varpi)_{\m}
\rightarrow H_1(Y_0(S/v),\LL/\varpi)_{\m}
\rightarrow H_1(Y,\LL/\varpi)_{\m}.$$
On the other hand, the composite of the first two maps is the map obtained by pushing
forward and then pulling back, which (after either adding or subtracting the relevant maps)
is exactly the Hecke operator~$D_v$.
It follows that the composite of the entire map is then killed if one passes
to the quotient $H_1(Y,\LL/\varpi)_{\m}/D_v H_1(Y,\LL/\varpi)_{\m}$.
In particular, it follows that the composite
$$\bigoplus_{v|S} H_1(Y_0(S/v),\LL/\varpi)_{\m} \rightarrow H_1(Y_0(S),\LL/\varpi)_{\m}
\rightarrow H_1(Y,\LL/\varpi)_{\m} \rightarrow H_1(Y,\LL/\varpi)_{\m}/I H_1(Y,\LL/\varpi)_{\m}$$
is zero, where $I$ is the ideal generated by~$D_v$ for all $v|S$.
This contradicts the surjectivity unless $I$ generates the unit ideal.
But this in turn  contradicts the assumption that~$D_v \in \m$ for all~$\m$.
\end{proof}

\section{The argument}
\label{section:argument}
Let~$\rho$ be as in Theorem~\ref{theorem:main}. By assumption, we have~$H_2(Y_0(S),\LL)_{\m} \ne 0$, by
the assumption that~$\rhobar$ is modular. 
Hence~$H_{m+2}(Y[1/S],\LL/\varpi)_{\m}$ is modular by  Theorem~\ref{theorem:raising}.
As in~\ref{section:patched}, we obtain a complex~$P_{\infty}$ such that:
\begin{enumerate}
\item $P_{\infty}$ is a perfect complex of finite~$S_{\infty}$-modules supported in
degrees~$m+2$ to~$1$.
\item $H_*(P_{\infty} \otimes S_{\infty}/(\a_N,\varpi^N)) = H_*(Y_1(Q_N)[1/S],\LL/\varpi^N)_{\m}$
for infinitely many sets of suitable Taylor-Wiles primes~$Q_N$, and moreover,
$$H_*(P_{\infty}) =: \Htw_*(Y[1/S],\LL) =  \projlim H_*(Y_1(Q_N)[1/S],\LL/\varpi^N)_{\m}.$$
\end{enumerate}
Suppose that the corresponding quotients had actions of Galois representations mapping to the entire Hecke rings~$\T$
rather than~$\T/I$ for some nilpotent ideals~$I$ of fixed order. Then this action would extend to an action of~$\Rloc^{\st}$ on
$H_*(P_{\infty})$, where here~$\Rloc^{\st}$ is defined to have Steinberg conditions at all primes in~$S$, an ordinary
condition at~$v|p$, and unramified elsewhere. In the special case when~$k = 2$
and one of
the representations~$\rhobar |_{D_v}$  for~$v|p$  is twist equivalent to a representation of the form
$$\left( \begin{matrix} \eps &  * \\ 0 & 1 \end{matrix} \right)$$
where~$*$ is peu ramifi\'{e}e, we take the local deformation ring at~$v|p$ to be the finite flat deformation ring instead.
The corresponding ring~$\Rloc^{\st}$ is reduced  of dimension~$\dim(S_{\infty}) - 1$ and has one geometric component.
We would be done as long as the co-dimension of~$H_*(P_{\infty})$ is  equal to at most one, because then
the action of~$\Rloc$ must be  faithful, and we deduce our modularity theorem.
  As in Lemma~6.2 of~\cite{CG}, if~$H_*(P_{\infty})$ has co-dimension at least~$2$, then it must be the case that, for some~$j \ge 2$,
 $$ \codim_{S_{\infty}} \Htw_{m+2 - j}(Y[1/S]) \le j,$$
 or, with~$i = m+2-j$ and~$i \le m$,
 $$ \codim_{S_{\infty}} \Htw_{i}(Y[1/S]) \le m +2-i.$$
  Yet this exactly contradicts 
 Proposition~\ref{prop:keyestimate} (with~$R = 1$), and we are done.

 \begin{remark} \label{remark:hida} \emph{
 If one wants to weaken Conjecture~\ref{conj:A} by omitting part~\ref{part:finite}, then one can instead
 work over the full Hida family, where the corresponding ordinary deformation ring once more has a single component (in every case). 
The modularity method in the Hida family case works in essentially the same manner, see~\cite{KT}, so one expects
that the arguments of this paper  can be modified to handle this case as well.
 }
 \end{remark}
 
 \subsection{Nilpotence}
 In practice, we only have an Galois representation to~$\T/I$ for certain nilpotent ideals~$I$.
 Equivalently, we only have a Galois representation associated to the action of~$\T$ on
 $$H_*(Y_1(Q_N)[1/S],\LL/\varpi^N)/I$$
 for ideals~$I$ with some fixed nilpotence. Even if there is no such ideal~$I$ when~$S = 1$,
our inductive arguments for higher~$S$ use exact sequences which increases the nilpotence.
 
 It suffices to show that~$\Rloc$ maps to the action of~$\T$ on
 certain sub-quotients of~$\Htw_{i}(Y[1/S])$ which are ``just as large'' as the modules~$\Htw_{i}(Y[1/S])$ themselves.
 Roughy, the idea is that one can also patch the ideals~$I$ to obtain an action of~$\T$ and~$S_{\infty}$ on
 $\Htw_{i}(Y[1/S])/I$ for some ideal~$I$ of~$S_{\infty}$ with~$I^k = 0$ and~$k$ depending only on~$S$ and~$\rhobar$.
 The Galois deformation rings now give lower bounds for the co-dimension of the modules~$\Htw_{i}(Y[1/S])/I$. 
 Since~$I^k = 0$, these can be promoted to give the same lower bounds for the co-dimension of the modules~$\Htw_{i}(Y[1/S])$,
 and 
 then the argument above will go through unchanged. This is (essentially) what we now do.

\section{Notes on nilpotent Ideals}
\label{section:nilpotent}

\subsection{Passing to finite level}
\label{subsection:finite}

Let~$S = \OL \llbracket \Delta_{\infty} \rrbracket$.
If $I$ and~$J$ are ideals of~$S$, then~$\Tor^i(S/I,S/J)$ is an~$S/I$ and a~$S/J$-module, hence an~$S/(I + J)$-module.
So, if~$\Tor^0(S/I,S/J) = S/(I + J)$ is finite, then so is~$\Tor^i(S/I,S/J)$. Hence, by induction, if~$M$ is finitely generated and~$\Tor^0(S/I,M)$
is finite, then so is~$\Tor^i(S/I,M)$.
Moreover, there is a spectral sequence:
$$\Tor^j(S_{\infty}/\a,H^i(P_{\infty})) \Rightarrow H^{i+j}(P_{\infty} \otimes_{S_{\infty}} S_{\infty}/\a )$$

\subsection{The setup}

Let~$\Delta_{\infty} = \Z^q_p$ and~$\Delta_N = (\Z/p^N \Z)^q$.
Let $S_{\infty} = \OL \llbracket \Delta_{\infty} \rrbracket$, and let~$S_N = \OL[\Delta_N]$.
We begin with the assumption that we have arranged things so that the complexes patch on the level of~$S_{\infty}$-modules.
That is, we have a complex~$P_{\infty}$ of finite free~$S_{\infty}$-modules  so that, if
$$P_N = P_{\infty} \otimes_{S_{\infty}} S_N/\varpi^N,$$
then~$H^*(P_N)$ is the complex of cohomology  associated to (infinitely many) Taylor--Wiles sets~$Q_N$ with coefficients in~$\OL/\varpi^N$.
There is a natural identification
$$H^*(P_{\infty})  = \projlim H^*(P_N),$$
and a natural map
$$H^*(P_{\infty}) \otimes_{S_{\infty}} S_N/\varpi^N \rightarrow  H^*(P_N).$$
Because everything is finitely generated, and so in particular $H^*(P_{\infty}) \otimes_{S_{\infty}}  S_N/\varpi^N$ is finite, there exists some function~$f(N)$ (which we may take to
be increasing and~$\ge N$) such that
$$H^*(P_{\infty}) \otimes_{S_{\infty}}  S_N/\varpi^N =  H^*(P_{f(N)}) \otimes S_N/\varpi^N.$$
Having fixed such a function~$f(N)$, we define~$A_N$ to be~$H^*(P_{f(N)}) \otimes S_N/\varpi^N$. By construction, there
is a natural \emph{surjective} map
$$A_N \rightarrow A_N \otimes S_M/\varpi^M \rightarrow A_M$$
for all~$N \ge M$, and $\projlim A_N = H^*(P_N)$. For various choices of~$Q = Q_N$ giving rise to$A_N$ (really the primes in~$Q_N$ are~$1 \mod p^{f(N)}$),
we get different actions of different Hecke algebras~$\T$. We shall construct quotients~$B_N$ of~$A_N$ on which~$R_{\infty}$ acts on the corresponding quotients of~$\T$
which act faithfully on~$B_N$, and then
patch to get a quotient~$B_{\infty}$ of~$A_{\infty} = H^*(P_{\infty})$ on which~$R_{\infty}$ also acts. The main point is to ensure that~$B_{\infty}$
has the same co-dimension as~$A_{\infty}$.

\subsection{Hecke Algebras}

For each~$Q = Q_N$, let~$\Delta = \Delta_N$. Letting~$\Phi$ run over all the quotients of~$\Delta$, and letting~$k$ run over all integers at most~$N$,
we shall define~$\T$ to be the ring of endomorphisms generated by Hecke operators on
$$\bigoplus_{\Phi,k} H^*(Y_1(\Phi),\LL/\varpi^k).$$
Localize at a non-Eisenstein ideal~$\m$. On each particular module~$A$ in the direct sum above
there is a quotient~$\T_A$ on which there exists a Galois representation with image in~$\GL_2(\T_A/I_A)$
where~$I^m_A =  0$ for some universally fixed~$m$. Note that:
\begin{enumerate}
\item One initially knows that~$m$ is bounded universally for any fixed piece~$A$. However, there is no problem taking direct sums. The point is as follows;
given rings~$A$ and~$B$ with ideals~$I_A$ and~$I_B$ such  that~$I^m_A = I^m_B = 0$, the ideal~$(I_A \oplus I_B)$ of~$A \oplus B$ satisfies~$(I_A \oplus I_B)^m = 0$.
In particular, if~$\T_{\Phi}$ is the quotient for a particular~$\Phi$ and the corresponding ideal is~$I_{\Phi}$, there is a map:
$$\T  \hookrightarrow  \left(  \bigoplus \T_{\Phi} \right)/\bigoplus I_{\Phi},$$
and hence the image is~$\T/I$ where~$I^m \subset \left(\bigoplus I_{\Phi} \right)^m = 0$.
\item If there exists a pseudo-representation to~$\T/I$ and~$\T/J$ there exists one to $\T/I \oplus \T/J$, and the image will be~$\T/(I \cap J)$.
Hence there exists a minimal such ideal~$I$.
\item If~$N \ge M$, there is a surjective map from~$\T_{Q_N} \rightarrow \T_{Q_M}$, where the sets~$Q_N$ and~$Q_M$ are compatible (that is, come from the
same set of primes). The reason this is surjective is that we are including all the quotients of~$\Delta$ in the definition of~$\T$. Again, by patching, the map~$\T_{Q_N} \rightarrow \T_{Q_M}/I_M$
has a Galois representation satisfying local--global, so it factors through a surjection
$\T_{Q,N}/I_N \rightarrow \T_{Q,M}/I_M$.
\end{enumerate}

In particular, for~$A_N$ and~$A_M$ drawn from the same set~$Q$, there is a commutative diagram
$$
\begin{diagram}
A_N & \rTo & A_N/I_N =: B_N\\
 \dOnto & & \dOnto \\
 A_M & \rTo & A_M/I_M =: B_M
 \end{diagram}
 $$
 The point of this construction is that~$B_N$ and~$B_M$ have actions of the Galois deformation rings~$R_Q$,
 and hence have actions of~$R_{\infty}$.
 Moreover, these actions are compatible in the expected way with the action of~$S_{\infty}$ as diamond operators and
 local ramification operators respectively.
 
 \begin{lemma}  \label{lemma:boot} Suppose that~$I$ is an ideal of  local ring~$(\T,\m)$ such that~$I^m = 0$,
 let $S \rightarrow \T$ be a ring homomorphism, let~$M$
 be a finitely generated~$\T$ and~$S$-module with commuting actions
 compatible with the map from~$S$ to~$\T$, and let~$J = \Ann_{S}(M/I M)$.
 Then~$J^m M = 0$.
 \end{lemma}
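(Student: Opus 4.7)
My plan is to establish the stronger containment $J^k M \subseteq I^k M$ for every $k \geq 0$ by induction on $k$. Setting $k = m$ and invoking the hypothesis $I^m = 0$ then gives $J^m M \subseteq I^m M = 0$, which is exactly the statement of the lemma.

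The base case $k = 1$ is just the definition of $J$: the equality $J = \Ann_S(M/IM)$ says precisely that $JM \subseteq IM$. For the inductive step, I would exploit the hypothesis that the $S$-action and the $\T$-action on $M$ commute in order to slide an ideal of one ring past an ideal of the other. Explicitly, assuming $J^{k-1} M \subseteq I^{k-1} M$, the chain
\[
J^k M \;=\; J^{k-1}(JM) \;\subseteq\; J^{k-1}(IM) \;=\; I\bigl(J^{k-1} M\bigr) \;\subseteq\; I \cdot I^{k-1} M \;=\; I^k M
\]
closes the induction: the first inclusion uses the base case, the middle equality uses commutativity of the two actions (a term $j_1 \cdots j_{k-1}(im)$ with $j_a \in J \subset S$ and $i \in I \subset \T$ can be rewritten as $i(j_1 \cdots j_{k-1} m)$ and vice versa), and the last inclusion is the inductive hypothesis.

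The one delicate point is the middle equality: a priori $J^{k-1}(IM)$ and $I(J^{k-1}M)$ are submodules produced by mixing actions of two different rings on $M$, and without compatibility this identity would fail. The compatibility is exactly what the hypothesis ``commuting actions compatible with the map from $S$ to $\T$'' supplies, so there is no genuine obstruction. Beyond this, the argument is entirely formal, and the lemma reduces to the observation that commuting actions let one trade powers of $J$ for powers of $I$ one factor at a time until the nilpotence $I^m = 0$ forces the result.
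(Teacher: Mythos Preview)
Your proof is correct and follows essentially the same idea as the paper's: both arguments establish that $J$ pushes $I^k M$ into $I^{k+1} M$, so that $J^m M \subseteq I^m M = 0$. The paper phrases this by filtering $M$ with graded pieces $I^k M/I^{k+1} M$ and exhibiting each as a quotient of copies of $M/IM$, whereas you prove the containment $J^k M \subseteq I^k M$ directly by induction using commutativity; your version is slightly more streamlined but the content is the same.
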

 
 \begin{proof} The module~$M$ has a filtration as~$\T$ and~$S$-modules with graded pieces~$I^k M/I^{k+1} M$ for~$k = 0$ to~$m - 1$.
 Hence it suffices to show that each of these graded pieces is annihilated by~$J$. 
However,
there is a  surjective  homomorphism of~$\T$ and~$S$ modules given by
$$\bigoplus_{I^k}  M/I M \rightarrow I^k M/I^{k+1} M,$$
where the sum goes over all generators~$g$ of~$I^k$ and sends~$M$ to~$gM \subset I^k M$.
Since~$J$ annihilates the source, it annihilates the target.
\end{proof}
 
 \medskip
 
For each~$N$, we now consider the extra data of a quotient~$B_N$ of~$A_N$ which carries an action of~$R_{\infty}$.
We patch to obtain a pair
$$H^*(P_{\infty}) = A_{\infty} \rightarrow B_{\infty},$$
where~$B_{\infty}$ has an action of~$R_{\infty}$ and~$S_{\infty}$, and there is a natural map~$S_{\infty} \rightarrow R_{\infty}$
which commutes with this action.
Let~$J = \Ann_{S_{\infty}}(B_{\infty})$.
We claim that~$J^m$ acts trivially on~$H^*(P_{\infty})$. To check this, it suffices to check this on~$A_N$ for each~$N$.
By construction, $A_N$ is a surjective system and hence so is~$B_N$. Thus~$B_{\infty}$ surjects onto~$B_N$, and hence~$J$
annihilates~$B_N$, and thus~$J^m$ annihilates~$A_N$ by Lemma~\ref{lemma:boot}.
Moreover, this same argument works term by term in each degree.

\begin{lemma} $\codim(B_{\infty}) = \codim(H^*(P_{\infty}))$ (in each degree) as an~$S_{\infty}$-module.
\end{lemma}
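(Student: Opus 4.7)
The plan is to show both codimensions agree by passing to the radical of the annihilator, exploiting the fact that the support of a finitely generated module over a Noetherian ring depends only on the radical of its annihilator.

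First I would set $A_{\infty} = H^*(P_{\infty})$ and unpack what is already established just above the lemma statement. Since $B_{\infty}$ is a quotient of $A_{\infty}$ as an $S_{\infty}$-module, we immediately have the containment of annihilators
\[
\Ann_{S_{\infty}}(A_{\infty}) \subseteq \Ann_{S_{\infty}}(B_{\infty}) = J.
\]
Taking radicals yields $\sqrt{\Ann_{S_{\infty}}(A_{\infty})} \subseteq \sqrt{J}$, and hence
\[
\mathrm{Supp}_{S_{\infty}}(B_{\infty}) \subseteq \mathrm{Supp}_{S_{\infty}}(A_{\infty}),
\]
which gives one inequality, namely $\codim(B_{\infty}) \ge \codim(A_{\infty})$.

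For the reverse direction, the key input is the result established in the paragraph immediately preceding the lemma: $J^m$ annihilates $A_{\infty}$ in each degree, for the fixed nilpotence exponent $m$ coming from Proposition~\ref{prop:st}. This gives
\[
J^m \subseteq \Ann_{S_{\infty}}(A_{\infty}),
\]
so after taking radicals $\sqrt{J} = \sqrt{J^m} \subseteq \sqrt{\Ann_{S_{\infty}}(A_{\infty})}$. Combining with the previous step gives $\sqrt{J} = \sqrt{\Ann_{S_{\infty}}(A_{\infty})}$, and therefore
\[
\mathrm{Supp}_{S_{\infty}}(B_{\infty}) = \mathrm{Supp}_{S_{\infty}}(A_{\infty}).
\]
Since codimension over the regular local ring $S_{\infty}$ is determined by the support, the two codimensions coincide.

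The argument is essentially formal once the nilpotence statement $J^m \cdot A_{\infty} = 0$ is in hand, so there is no genuine obstacle here; the bookkeeping of applying this in each cohomological degree separately is routine because the entire construction (patching of $I_N$, application of Lemma~\ref{lemma:boot}) is performed degree by degree. The one point worth stating explicitly in the write-up is that $m$ is independent of the degree and of $N$, which is guaranteed by the uniform nilpotence coming from Proposition~\ref{prop:st} together with the direct-sum argument in the discussion of Hecke algebras above.
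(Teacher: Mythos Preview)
Your argument is correct and is essentially the same as the paper's: both reduce to the fact that $J^m \subset \Ann_{S_\infty}(A_\infty) \subset J$ forces equal codimension, using the input $J^m A_\infty = 0$ established just before the lemma via Lemma~\ref{lemma:boot}. The only cosmetic difference is that you pass to radicals (so $\sqrt{J} = \sqrt{\Ann(A_\infty)}$ and the supports coincide), whereas the paper filters $S_\infty/\Ann(A_\infty)$ by the graded pieces $J^k/J^{k+1}$, each an $S_\infty/J$-module; the paper's own Remark immediately after the proof makes clear that this is exactly the ``reduced structure'' observation you are invoking.
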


\begin{proof}
\label{lemma:bounds}
Let~$I = \Ann_{S_{\infty}}(H^*(P_{\infty}))$.
Because it is finitely generated, the co-dimension of~$H^*(P_{\infty})$ is the co-dimension of~$S_{\infty}/I$. Equally,
the co-dimension of~$B_{\infty}$ is the co-dimension of~$S_{\infty}/J$ (again using finite generation). 
Hence it suffices to show that
$$J^m \subset I \subset J \Rightarrow \codim(S_{\infty}/I) =  \codim(S_{\infty}/J).$$
One inequality is obvious. However, the former module has a finite filtration by~$J^k/J^{k+1}$,
which is finitely generated and annihilated by~$k$. \end{proof}

Note that this argument also applies to a submodule~$A'_{\infty} \subset A_{\infty}$.

\begin{remark} \emph{One way to view the lemma above is follows.
The co-dimension of a finitely generated module is defined in terms of the 
dimension of the support. The dimension of a closed subscheme of~$S_{\infty}$, on the other hand,
only depends on its reduced structure. }
\end{remark}

\medskip

Let us now remark how to modify the argument of section~\ref{section:argument}. All bounds on the co-dimensions of~$H^*(P_{\infty})$
still apply by combining the bounds on the appropriate deformation rings with Lemma~\ref{lemma:bounds}.
Hence we deduce that~$\Htw_{m+1}(Y[1/S],\LL)_{\m}$ has co-dimension one and thus (because~$\Rloc$ is reduced has only one
geometric component) is nearly faithful as an~$\Rloc$ module. From this we want to deduce that~$H_{m+1}(Y[1/S],\LL)_{\m}$
is also nearly faithful as an~$R$ module. The module~$H_{m+1}(Y[1/S],\LL)_{\m}$ differs from~$\Htw_{m+1}(Y[1/S],\LL)_{\m}/\a$
by other terms arising from the spectral sequence in~\ref{subsection:finite}. However, all those terms must be finite --- if not,
then there must be a smallest degreee~$j$ such that~$\Htw_{j}(Y[1/S],\LL)_{\m}/\a$ is infinite, which from the spectral sequence
will contribute something non-zero to~$\Htw_j(Y[1/S],\LL)_{\m} \otimes \Q$, an impossibility for~$j \le m$. Hence we 
obtain an isomorphism
$$R^{\red} = \T^{\red},$$
as required.

\section{Acknowledgements}

I would like to thank Toby Gee for some helpful remarks and corrections on an earlier version of this manuscript.
The debt this paper owes to the author's previous collaborations~\cite{CG,CV} with Geraghty and Venkatesh
should also be clear.

\bibliographystyle{amsalpha}
\bibliography{semistable}
 
 \end{document}